\documentclass[12pt]{amsart}
\usepackage{amssymb}
\usepackage{amsmath}
\usepackage{amsrefs}
\usepackage{hyperref}
\usepackage{bbm}
\usepackage{xcolor}

\definecolor{refkey}{gray}{.5}   
\definecolor{labelkey}{gray}{.5} 
\textwidth=6 true in
\hoffset=-0.5 true in
\textheight=9 true in
\voffset=-0.4 true in
\usepackage{graphicx}
\usepackage{tikz-cd}
\usepackage{epsfig}
\usepackage{mathtools,amsfonts}
\usepackage{here}

\usepackage{comment}
\theoremstyle{plain}

\newtheorem{theorem}{Theorem}[section]
\newtheorem{lemma}[theorem]{Lemma}

\newtheorem{corollary}{Corollary}
\theoremstyle{definition}

\newtheorem*{question*}{Question}

\numberwithin{equation}{section}

\numberwithin{equation}{section}

\allowdisplaybreaks 

\title[Certain results on uniform circle random covering problems]{Certain results on uniform circle random covering problems}

\begin{document}

\author{Davit Karagulyan}

\date{}

\maketitle
\begin{abstract}
In this note we extend a theorem from \cite{Koivusalo-Liao-Persson}. 
\end{abstract}

Let $L=\{ \ell_{n}\}_{n\in \mathbb{N}}$ be a decreasing sequence of positive numbers with $0<\ell_{n}<1$ and $\left\{\omega_{n}\right\}_{n \in \mathbb{N}}$ a sequence of i.i.d. random variables on the circle $\mathbb{T}=\mathbb{R}/\mathbb{Z}$. 
Consider the random intervals
\[
B(\omega_{n},r_{n}):=\left(\omega_{n}-r_{n}, \omega_{n}+r_{n}\right),
\]
where $r_n=\ell_{n} / 2$. 
If $\left\{\omega_{n}\right\}$ are uniformly distributed on $\mathbb{T}$, the Borel-Cantelli lemma assures that Lebesgue almost every (a.e. for short) point on $\mathbb{T}$ is covered infinitely often with probability one if and only if $\sum_{n=1}^{\infty} \ell_{n}=\infty$. 
Moreover, $\sum_{n=1}^{\infty} \ell_{n}<\infty$ implies that Lebesgue a.e. point in $\mathbb{T}$ is covered finitely often with probability one. 
In \cite{Dvoretzky1956}, Dvoretzky  asked the question whether $\mathbb{T}= \limsup_{n \rightarrow \infty} B(\omega_{n},r_{n})$ almost surely. 
This condition means that every point on $\mathbb{T}$ is covered infinitely often with the intervals $B(\omega_{n},r_{n})$.
Kahane \cite{Kahane1959} proved that $\mathbb{T}=\limsup_{n\to \infty } B(\omega_{n},r_{n})$ a.s. when $\ell_{n}=c/n$ with $c>1$, but this is not the case with $0<c<1$ as was proved by Billard \cite{Billard1965}. 
The case $\ell_{n}=1/n$ was solved by Mandelbrot \cite{Mandelbrot1972}. 
A complete answer was obtained by Shepp \cite{Shepp1972}: $\mathbb{T}=\limsup_{n\to \infty} B(\omega_{n},r_{n})$ a.s. if and only if
\[
\sum_{n=1}^{\infty} \frac{1}{n^{2}} \exp \left(\ell_{1}+\cdots+\ell_{n}\right)=\infty .
\]

In \cite{Koivusalo-Liao-Persson}, the authors introduce a new circle-covering method called uniform random covering. 
One can think of this as a uniform version of the Dvoretzky covering problem. 
Suppose that $\{\omega_n\}$ is an i.i.d. sequence uniformly distributed on $\mathbb{T}$.  
For every $n\in \mathbb{N}$, let $E_{n}=\cup_{k=1}^{n} B(\omega_{k}, r_{n})$. 
Define then
\[
\mathcal{U}(\omega)=\liminf _{n \rightarrow \infty} E_{n}.
\]
Notice that for every $y\in \mathcal{U}(\omega)$, there is $N\in \mathbb{N}$ such that for every $n\geq N$ there exists $k\in \{1,\dots ,n\}$ such that $|y-\omega_{k}|<r_n$. 
Hence, we say there is a uniform random covering of the set $A\subset \mathbb{T}$ if $\Pr (\omega\colon A\subset \mathcal{U}(\omega))=1$. 
In \cite{Koivusalo-Liao-Persson}, the authors investigate the almost sure covering properties for such coverings. 
In particular they investigate the probabilities $\Pr(\mathcal{U}(\omega)=\mathbb{T})$. 
In this paper we will study the probabilities $\Pr(\mathcal{U}(\omega)=A)$ for general sets $A$. 
We will also discuss some stability results for such coverings.

We next prove a theorem that extends \cite{Koivusalo-Liao-Persson}*{Theorem 1} to general sets $A\subset \mathbb{T}$. 
We use techniques from the theory of Dvoretzy covering problem. Before formulating the theorem, we would like to point out a typo in \cite{Koivusalo-Liao-Persson} in order to avoid confusion between notations $r_n$ and $\ell_n$. The notation $r_n$ in \cite{Koivusalo-Liao-Persson}, is defined as the radius of the intervals but sometimes is used as its length. For instance, in \cite{Koivusalo-Liao-Persson}*{Theorem 1}, $r_n$ is the length of the intervals. 
This corresponds to $\ell_n$ in our paper and not $r_n=\ell_n /2$. 
We remark that the proof provided in \cite{Koivusalo-Liao-Persson} relies on formulas that hold only for random intervals of the same length and for the entire circle. 

\begin{theorem}\label{M1}
Let $L=\{\ell_n\}$ be a decreasing sequence of positive numbers with $\ell_n\in (0,1)$, and $\{\omega_n\}$ be an i.i.d. sequence uniformly distributed on $\mathbb{T}$. 
\begin{enumerate}
    \item[1)] Let  
    \[
    \delta=\liminf_{n\rightarrow \infty} \left(\frac{n \ell_n}{\ln n}\right).
    \]
    Assume that $\delta<1$ and let $A \subset \mathbb{T}$ be a set with $\delta<\dim_{\mathrm{H}} A$. 
    Then $\Pr (\omega: A\subset \mathcal{U}(\omega))=0$. 
    Furthermore, the Hausdorff dimension of the random subset of $A$ that is not covered by $\mathcal{U}(\omega)$ has Hausdorff dimension at least $\dim_{\mathrm{H}} A-\delta$ almost surely.
    \item[2)] Let $A\subset \mathbb{T}$ be a set with $\overline{\dim}_{\mathrm{B}} A\leq\beta$ (or $N(r)\leq r^{-\beta}$, where $N(r)$ is the number of balls of size $r$ needed to cover $A$) and assume that for some $d\in (0,1)$ we have that 
    \[
    \sum_{n=1}^\infty\left(\frac{1}{\ell_n}\right)^\beta\exp \left(-n d \ell_n\right)<\infty.
    \]
    Then $\Pr (\omega\colon A\subset \mathcal{U}(\omega))=1$. 
\end{enumerate}
\end{theorem}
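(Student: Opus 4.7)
For statement (2), the plan is a first-moment Borel--Cantelli argument. Fix $d$ as in the hypothesis and cover $A$ with $N(s_n) \leq C\ell_n^{-\beta}$ balls $B(y_i, s_n)$ of radius $s_n := (1-d)\ell_n/2$. If $A \not\subset E_n$, then some $y \in A$ lies outside every $B(\omega_k, r_n)$ for $k \leq n$, which forces the centre $y_i$ of the covering ball containing $y$ to be at distance at least $r_n - s_n = d\ell_n/2$ from every $\omega_k$; this event has probability $(1 - d\ell_n)^n \leq e^{-dn\ell_n}$. Union bounding, $\Pr(A \not\subset E_n) \leq C\ell_n^{-\beta} e^{-dn\ell_n}$, which is summable by hypothesis, and the first Borel--Cantelli lemma concludes.

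For statement (1) the plan is a Frostman energy argument along a subsequence realising the liminf. Fix $s \in (\delta, \dim_{\mathrm{H}} A)$ and $\epsilon > 0$ with $\delta + \epsilon < s$; pick a Frostman measure $\mu$ on $A$ with $\mu(B(y, r)) \leq C r^s$; and extract $(n_j)$ along which $n_j \ell_{n_j} \leq (\delta + \epsilon)\ln n_j$. The natural random measure is
\[
\nu_j := (1 - \ell_{n_j})^{-n_j}\,\mathbbm{1}_{E_{n_j}^c}\,\mu,
\]
whose expected mass is $\mu(A)$ since $\Pr(y \in E_{n_j}^c) = (1 - \ell_{n_j})^{n_j}$. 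The key step is to bound $\mathbb{E}[\nu_j(A)^2]$ and the expected Riesz energy $\mathbb{E}[I_t(\nu_j)]$ uniformly in $j$ for $t < s - \delta - \epsilon$. Each is a double integral of $\Pr(x, y \in E_{n_j}^c)/(1-\ell_{n_j})^{2 n_j}$ against $d\mu \otimes d\mu$, which I would split into a far regime $|x-y| > 2 r_{n_j}$ and a near regime $|x-y| \leq 2 r_{n_j}$.

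In the far regime, the elementary inequality $(1-2\ell)^n \leq (1-\ell)^{2n}$ bounds the density ratio by one, giving the deterministic contributions $\mu(A)^2$ and $I_t(\mu)$ respectively. In the near regime the density ratio is controlled by $(1-\ell_{n_j})^{-n_j} \leq e^{n_j \ell_{n_j}} \leq n_j^{\delta + \epsilon}$, while the Frostman bound $\int_{|y-x| \leq 2r_{n_j}} |y-x|^{-t}\,d\mu(y) \leq C \ell_{n_j}^{s-t}$ produces a near contribution of order $n_j^{\delta + \epsilon - (s-t)}(\ln n_j)^{s-t}$, which vanishes precisely when $t < s - \delta - \epsilon$. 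Both $\mathbb{E}[\nu_j(A)^2]$ and $\mathbb{E}[I_t(\nu_j)]$ are thus uniformly bounded.

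Paley--Zygmund and Markov then yield a uniform-in-$j$ positive probability that $\nu_j(A) \geq \mu(A)/2$ and $I_t(\nu_j) \leq K$; since this event is symmetric in the i.i.d.\ sequence $(\omega_n)$, the Hewitt--Savage 0--1 law (via $\Pr(B_j \text{ i.o.}) \geq \limsup_j \Pr(B_j) > 0$) promotes it to holding infinitely often almost surely. Extracting a weak-$*$ subsequential limit $\nu_\infty \neq 0$ with $I_t(\nu_\infty) < \infty$ and invoking the Frostman mass distribution principle then gives $\dim_{\mathrm{H}}(A \cap \mathcal{U}(\omega)^c) \geq t$; letting $\epsilon \downarrow 0$ and $s \uparrow \dim_{\mathrm{H}} A$ yields the desired bound, and $\Pr(A \subset \mathcal{U}(\omega)) = 0$ is immediate since the uncovered subset has positive dimension. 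The main technical obstacle is the support argument in the last step: $\nu_\infty$ lives a priori on $\overline{\limsup_j E_{n_j}^c}$, which may strictly contain $\mathcal{U}(\omega)^c$. The cleanest resolution combines the tail/exchangeability 0--1 invariance of $\{\dim_{\mathrm{H}}(A \cap \mathcal{U}(\omega)^c) \geq t\}$ under finite modifications of $(\omega_n)$ with a thickening of the indicators---replacing $\mathbbm{1}_{E_{n_j}^c}$ by $\mathbbm{1}_{\{y:\, d(y, E_{n_j}) > \eta_j\}}$ for a carefully chosen $\eta_j \to 0$---so that $\mathrm{supp}\,\nu_j$ lies safely inside $\mathcal{U}(\omega)^c$ eventually.
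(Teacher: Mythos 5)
Your argument for part 2) is correct and is essentially the paper's own proof: cover $A$ at scale comparable to $\ell_n$, translate ``a covering ball is not contained in $E_n$'' into ``its centre avoids all $B(\omega_k,d\ell_n/2)$'', union bound, and first Borel--Cantelli. Nothing to add there.

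Part 1) is where the genuine gap lies, and it is exactly the point you flag yourself. Your moment computations for $\nu_j=(1-\ell_{n_j})^{-n_j}\mathbbm{1}_{E_{n_j}^c}\,\mu$ are fine (the far/near splitting, $(1-2\ell)^n\le(1-\ell)^{2n}$, and the Frostman estimate all work, and Paley--Zygmund plus Hewitt--Savage does give that almost surely, for infinitely many $j$, the closed set $E_{n_j}^c\cap\operatorname{supp}\mu$ carries a measure of mass $\ge\mu(A)/2$ and $t$-energy $\le K$). But this does not produce a measure on $\mathcal{U}(\omega)^c\cap A=\limsup_j E_{n_j}^c\cap A$: a weak-$*$ limit of the $\nu_{j_i}$ is only supported on $\bigcap_m\overline{\bigcup_{j\ge m}(E_{n_j}^c\cap\operatorname{supp}\mu)}$, which can be much larger than the limsup set (indeed, if $n_j$ grows quickly, $\sum_j(1-\ell_{n_j})^{n_j}<\infty$, so $\mu$-a.e.\ fixed point lies in only finitely many $E_{n_j}^c$, so the limit measure typically charges covered points). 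Neither of your proposed repairs closes this: the thickening $\{y:d(y,E_{n_j})>\eta_j\}$ would only help if you could guarantee that $\operatorname{supp}\nu_\infty$ is approximated by $\operatorname{supp}\nu_{j_i}$ at rate $\eta_{j_i}$, and weak-$*$ convergence gives no such rate (moreover the first-moment normalization forces $\eta_j\lesssim 1/n_j$); and the $0$--$1$ law for $\{\dim_{\mathrm H}(A\cap\mathcal{U}(\omega)^c)\ge t\}$ only upgrades a positive probability you have not yet established.

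The structural fix is to work with \emph{nested} closed sets rather than the single-scale sets $E_{n_j}^c$, e.g.\ $D_J=\bigcap_{j_0\le j\le J}E_{n_j}^c$, which decrease to $\bigcap_{j\ge j_0}E_{n_j}^c\subset\mathcal{U}(\omega)^c$, so that weak-$*$ limits do land on the right set. But once you do this, the moment computations become products over the blocks of new intervals, and (after discarding the shrunken early intervals, which is why a rapidly growing $n_k$ is needed) you are re-deriving the classical Dvoretzky lower bound for the modified length sequence. This is precisely the paper's route: write $E_{n_k}=R_{n_k}\cup C_{n_k}$, note $\dim_{\mathrm H}(\limsup_k R_{n_k})=0$ when $n_k$ grows fast, check $D(L')\le\delta$ for the lengths $\ell'_s$, and invoke Kahane's theorem, whose proof succeeds exactly because the Dvoretzky uncovered set $\bigcup_N\bigcap_{s\ge N}B_s^c$ is an increasing union of decreasing limits of closed sets. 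So your overall strategy can be salvaged, but as written the decisive step — passing from ``good measures on $E_{n_j}^c$ infinitely often'' to ``a good measure on the set of points uncovered infinitely often'' — is missing, and the theorem's conclusions (both $\Pr(A\subset\mathcal{U}(\omega))=0$ and the dimension bound) depend on it.
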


Here and below, let $\dim_{\mathrm{H}}$ and $\overline{\dim}_{\mathrm{B}}$ denote the Hausdorff dimension and upper box-counting dimension, respectively. 
We refer the reader to \cites{Falconer1990, Kahane1985}. 

\begin{corollary}\label{cor:dim}
    For the sequence $\ell_n=c\frac{\ln n}{n}$ and the set $A\subset \mathbb{T}$ there is uniform circle covering by $\mathcal{U}(\omega)$ if $c>\overline{\dim}_{\mathrm B} A+1$ and no covering if $c<\dim_{\mathrm H} A$.
\end{corollary}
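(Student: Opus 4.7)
My plan is to derive both statements directly from Theorem \ref{M1} applied to the specific sequence $\ell_n = c\ln n / n$. The essential preliminary computation is that
\[
\delta = \liminf_{n \to \infty} \frac{n\ell_n}{\ln n} = c,
\]
so in this setting the scalar $\delta$ in part 1) coincides with the parameter $c$ of the corollary.

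For the no-covering direction the argument is immediate. Because $A \subset \mathbb{T}$ forces $\dim_{\mathrm{H}} A \leq 1$, the hypothesis $c < \dim_{\mathrm{H}} A$ automatically yields both $\delta = c < 1$ and $\delta < \dim_{\mathrm{H}} A$, which are exactly the two standing hypotheses of part 1) of Theorem \ref{M1}. Invoking that part then delivers $\Pr(A \subset \mathcal{U}(\omega)) = 0$ without further work.

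For the covering direction, the plan is to verify the summability hypothesis of part 2) by a careful parameter choice. Given $c > \overline{\dim}_{\mathrm{B}} A + 1$, I would first fix an exponent $\beta$ with $\overline{\dim}_{\mathrm{B}} A < \beta < c - 1$, which is possible precisely because of the assumption on $c$; the definition of upper box dimension then ensures $N(r) \leq r^{-\beta}$ for all sufficiently small $r$. Next I would choose $d \in (0,1)$ with $d > (\beta+1)/c$, possible because $(\beta+1)/c < 1$. With these choices, since $\exp(-nd\ell_n) = \exp(-cd\ln n) = n^{-cd}$ and $1/\ell_n \asymp n/\ln n$,
\[
\sum_{n} \Bigl(\frac{1}{\ell_n}\Bigr)^\beta \exp(-nd\ell_n) \;\asymp\; \sum_{n} \frac{n^{\beta - cd}}{(\ln n)^{\beta}},
\]
and this series converges because $\beta - cd < -1$. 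Part 2) of Theorem \ref{M1} then gives $\Pr(A \subset \mathcal{U}(\omega)) = 1$.

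I do not foresee any genuine obstacle: the entire corollary reduces to parameter calibration (selecting $\beta$ strictly between $\overline{\dim}_{\mathrm{B}} A$ and $c-1$, and then $d$ between $(\beta+1)/c$ and $1$) followed by a one-line logarithmic-polynomial convergence check. The only care required is to note that the bound $N(r)\leq r^{-\beta}$ needed by part 2) holds only for $r$ small enough, but this affects only finitely many terms of the series and so does not disturb convergence.
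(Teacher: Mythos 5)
Your argument is correct and follows essentially the same route as the paper: compute $\delta=c$, invoke Theorem \ref{M1}-1) for the no-covering direction, and verify the summability hypothesis of Theorem \ref{M1}-2) for $\ell_n=c\ln n/n$, where the series behaves like $\sum_n n^{\beta-cd}(\ln n)^{-\beta}$ and converges once $cd>\beta+1$. Your extra care in taking $\beta$ strictly between $\overline{\dim}_{\mathrm B}A$ and $c-1$ (so that $N(r)\leq r^{-\beta}$ genuinely holds for small $r$) is a minor refinement of the paper's parameter choice, not a different method.
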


\begin{corollary}\label{cor:covering_fail}
If $\delta<1$ and $A=\mathbb{T}$, then there is no $f$ such that $\Pr (\omega\colon A\subset \mathcal{U}(\omega,f))=1$: namely there is no $f \in L^1(\mathbb{T})$ such that if we sample the centers with respect to $f$, then we will have a uniform random covering of the circle. 
Hence, in this case, the uniform random covering can not be recovered by a perturbation.
\end{corollary}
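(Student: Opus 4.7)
The plan is to reduce to Theorem~\ref{M1}(1) via a dichotomy on whether the density $f$ is the uniform density. If $f \equiv 1$ Lebesgue-a.e., then Theorem~\ref{M1}(1) applied with $A = \mathbb{T}$ is immediate, since $\dim_{\mathrm H}\mathbb{T} = 1 > \delta$; that theorem then produces an uncovered set of Hausdorff dimension at least $1 - \delta > 0$ almost surely, so in particular $\Pr(\mathbb{T}\subset\mathcal{U}(\omega)) = 0$.

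Otherwise, the constraint $\int_\mathbb{T} f = 1$ combined with $f \not\equiv 1$ forces $|\{f < 1\}| > 0$, so there exist $\eta > 0$ and a positive-measure Borel set $E \subset \mathbb{T}$ with $f \leq 1 - \eta$ on $E$. Applying Lebesgue differentiation at density points of $E$ that are also Lebesgue points of $f$, followed by an Egorov-type extraction, produces a compact set $F \subset E$ with $|F| > 0$ and an index $n_0 \in \mathbb{N}$ such that
\[
\phi(y, r_n) := \int_{B(y, r_n)} f(x)\,dx \leq (1 - \eta/2)\,\ell_n
\]
uniformly for every $y \in F$ and every $n \geq n_0$.

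On $F$ the $f$-sampled process is therefore controlled by a uniform process with effective interval length $(1 - \eta/2)\ell_n$; equivalently, the critical parameter $\delta$ in Theorem~\ref{M1}(1) is replaced by $(1 - \eta/2)\delta$, which is still strictly less than $1$. Running the argument of Theorem~\ref{M1}(1) with the upper bound $\phi(y, r_n) \leq (1-\eta/2)\ell_n$ in place of the uniform equality $\Pr(y \in B(\omega_k, r_n)) = \ell_n$ yields an uncovered subset of $F$ of Hausdorff dimension at least $\dim_{\mathrm H} F - (1 - \eta/2)\delta = 1 - (1 - \eta/2)\delta > 0$ almost surely. In particular $F \not\subset \mathcal{U}(\omega, f)$ a.s., hence $\mathbb{T} \not\subset \mathcal{U}(\omega, f)$ a.s., which gives $\Pr(\omega : \mathbb{T} \subset \mathcal{U}(\omega, f)) = 0$.

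The main obstacle is verifying that the proof machinery of Theorem~\ref{M1}(1)—Frostman's lemma and energy estimates—uses only upper bounds on single-interval covering probabilities of the form $\Pr(y \in B(\omega_k, r_n)) \leq C\ell_n$, so that the improved bound on $F$ plugs in directly under the replacement $\delta \leadsto (1 - \eta/2)\delta$. The Egorov-type extraction that produces the uniform bound on $F$ is routine given the $L^1$ assumption on $f$ and Lebesgue differentiation, so the whole argument hinges on this structural feature of the proof of Theorem~\ref{M1}(1).
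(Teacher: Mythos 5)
Your reduction to the non-uniform case (Lebesgue differentiation plus Egorov to get a compact $F$ of positive measure with $\int_{B(y,r_n)}f\,dx\le(1-\eta/2)\ell_n$ for all $y\in F$, $n\ge n_0$) is fine, and the uniform branch of your dichotomy is immediate from Theorem~\ref{M1}-1). But the step you yourself flag as ``the main obstacle'' is a genuine gap, not a routine verification. You cannot ``run the argument of Theorem~\ref{M1}(1)'' with the bound $\phi(y,r_n)\le(1-\eta/2)\ell_n$ in place of the uniform identity, because in this paper Theorem~\ref{M1}-1) is not proved by a self-contained Frostman/energy argument: after the block decomposition and the zero-one law it is a citation of Kahane's theorem (Theorem~\ref{Thm-Kh}, i.e.\ \cite{Kahane1985}*{p.~160, Theorem~4} and Proposition~7), which is stated and proved for \emph{uniformly} distributed centers. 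Moreover, the structural claim you rely on --- that the machinery uses only upper bounds on single-interval covering probabilities $\Pr(y\in B(\omega_k,r_n))$ --- is false as stated: the second-moment/energy estimate behind the non-covering and dimension statements requires controlling joint probabilities $\Pr\bigl(y_1\notin B(\omega,r),\,y_2\notin B(\omega,r)\bigr)=1-\mu\bigl(B(y_1,r)\cup B(y_2,r)\bigr)$ with $\mu=f\,dx$, i.e.\ the correlation term $\mu\bigl(B(y_1,r)\cap B(y_2,r)\bigr)$, and comparing it with the product of the one-point non-covering probabilities. Adapting this to a general density is exactly the nontrivial content of the non-uniform Dvoretzky covering literature; first-moment (single-ball) bounds alone cannot even show that the uncovered set is nonempty, since its Lebesgue measure is typically zero. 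So as written the proposal asserts, rather than proves, its key step.

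For comparison, the paper dispatches the non-uniform case in one stroke: the block reduction of Theorem~\ref{M1}-1) turns uniform covering into a Dvoretzky covering problem for the sequence $L'$ with $D(L')\le\delta<1$ (Lemma~\ref{lem:D<c}), and for $f$ not a.e.\ equal to $1$ one has $m_f=\essinf f<1$, so $D(L')<1<1/m_f$ and \cite{HK2021}*{Theorem~1.1-a)} rules out Dvoretzky covering with centers sampled from $f$. Your route could in principle be completed --- the observation $\mu(B(y_1,r)\cap B(y_2,r))\le\mu(B(y_1,r))\le(1-\eta/2)\ell_n$ for $y_1\in F$, combined with the rapid growth of the blocks $\{n_k\}$, does keep the correlation exponent of order $(1-\eta/2)\delta\ln(1/|y_1-y_2|)<\ln(1/|y_1-y_2|)$, so a Paley--Zygmund/energy argument on $F$ should close --- but carrying this out means reproving a special case of the non-uniform results of \cites{Fan-Karagulyan2021,HK2021} rather than quoting Theorem~\ref{M1}(1), and that work is missing from the proposal.
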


\subsection*{Acknowledgement}
I would like to thank Tomas Persson, Michihiro Hirayama for helpful discussions. 

\section{Proof of Theorem \ref{M1} and Corollaries \ref{cor:dim} and \ref{cor:covering_fail}}

\subsection{Proof of Theorem \ref{M1}-1) and Corollary \ref{cor:covering_fail}}

\begin{proof}[Proof of Theorem \ref{M1}-1)] 

Let $\{n_k\}_{k\geq 1}$ be an increasing sequence of integers and write
\[
E_{n_k}(\omega)=\cup_{s=1}^{n_k} B_{s,n_k}= \left(\cup_{s=1}^{n_{k-1}}B_{s,n_k}\right) \cup  \left(\cup_{s=n_{k-1}+1}^{n_{k}}B_{s,n_k}\right)=R_{n_k}\cup C_{n_k},
\]
where $B_{s,n_k}=B(\omega_s,r_{n_k})$ with $r_{n_k}=\ell_{n_k}/2$. 
Note that if $n_k$ growth fast enough, then for arbitrary $\alpha<1$ we will have for $R_{n_k}$  that
\[
\sum_{k=1}^\infty \sum_{s=1}^{n_{k-1}}|B_{s,n_{k}}|^\alpha=\sum_{k=1}^\infty n_{k-1} \ell_{n_{k}}^\alpha<\infty.
\]
Thus, for arbitrary $\omega$ the set of points that is covered infinitely often by the sets $R_{n_k}$, i.e. $\limsup_{k\to \infty} R_{n_k}$ will have zero Hausdorff dimension. 
We now assume that $\{n_k\}$ fulfils the above conditions. 
Next note that the blocks $\{ C_{n_k}\}_{k\in \mathbb{N}}$ are independent, then by the zero-one law we will have that the probabilities
\begin{equation}\label{A2}
    \Pr \left(A\subset\limsup_{k\rightarrow \infty}C_{n_k}\right)=\Pr \left(A\subset\limsup_{k\rightarrow \infty}\bigcup_{s=n_{k-1}+1}^{n_{k}}B_{s,n_k}\right)
\end{equation}
is either zero or one. 
We now consider the sequence of random intervals from above, namely $\{B_{s,n_k}\colon n_{k-1}+1\leq s \leq n_{k}, k\in \mathbb{N}\}$.
Note if there is almost eventual covering then there is Dvoretzky covering for the above sequence of random intervals.
Define a new sequence of lengths as follows 
\[
\ell'_s=\ell_{n_{k+1}} 
\]
for all $n_{k}< s\leq n_{k+1}$. 
These are simply the intervals in each block $B_{s,n_k}$. 
Let $L'=\{\ell'_s\}_{s\in \mathbb{N}}$. 
Recall that we let $\delta=\liminf_{n\to \infty} (n\ell_n)/\ln n$. 
\begin{lemma}\label{lem:D<c} 
One can chose the sequence $\{n_k\}_{k}$ in such a way that 
\[
D(L')\leq \delta.
\]
\end{lemma}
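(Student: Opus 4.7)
The plan is to exploit the defining property of $\delta = \liminf_{n\to\infty} (n\ell_n)/\ln n$. By definition of liminf, there is an increasing sequence $\{m_j\}_{j\geq 1}$ of positive integers along which $(m_j \ell_{m_j})/\ln m_j \to \delta$. My strategy is to pick the $n_k$'s inductively from within $\{m_j\}$, while also preserving the rapid-growth condition already required in the preceding step of the proof (i.e., the one forcing $\sum_k n_{k-1}\ell_{n_k}^\alpha < \infty$).

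Concretely, the induction runs as follows. Given $n_0 < n_1 < \cdots < n_k$, I choose $n_{k+1} := m_{j(k)}$ with $j(k)$ large enough that (a) $n_k \ell_{n_{k+1}}^\alpha \leq 2^{-k}$, a summable bound which preserves the earlier estimate on $R_{n_k}$; and (b) $(m_{j(k)} \ell_{m_{j(k)}})/\ln m_{j(k)} \leq \delta + 1/k$. Since $\ell_n \to 0$ (which follows from $\delta < 1$ together with the monotonicity of $\ell_n$) and $m_j \to \infty$, both (a) and (b) are achievable simultaneously by taking $j(k)$ sufficiently large.

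With this choice, $\ell'_s = \ell_{n_{k+1}}$ for $n_k < s \leq n_{k+1}$, and evaluating $s\ell'_s/\ln s$ along the particular subsequence $s = n_{k+1}$ yields
\[
\frac{s\,\ell'_s}{\ln s} = \frac{n_{k+1}\,\ell_{n_{k+1}}}{\ln n_{k+1}} \leq \delta + \frac{1}{k} \longrightarrow \delta \quad \text{as } k\to\infty.
\]
Consequently $D(L') = \liminf_{s\to\infty} (s\ell'_s)/\ln s \leq \delta$, which is the statement of the lemma.

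The only mild obstacle is the need to simultaneously verify conditions (a) and (b) at each induction step. But as just observed, both are monotone in the size of $j(k)$, so no genuine conflict can occur, and the lemma essentially reduces to the liminf characterization of $\delta$ together with the decrease of $\{\ell_n\}$.
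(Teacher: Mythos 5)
There is a genuine gap: you bound the wrong quantity. The index $D(L')$ in the lemma is the Kahane covering index that enters Theorem \ref{Thm-Kh}, namely
\[
D(L')=\limsup_{N\to\infty}\frac{\ell'_1+\ell'_2+\cdots+\ell'_N}{\ln N},
\]
a limsup of normalized \emph{partial sums}; it is not $\liminf_{s\to\infty}(s\ell'_s)/\ln s$, which is what your final display controls. Checking $s\ell'_s/\ln s$ along the block endpoints $s=n_{k+1}$ says nothing about $\sum_{s\le N}\ell'_s$, which accumulates contributions from \emph{all} earlier blocks, nor about intermediate $N$ with $n_k<N<n_{k+1}$. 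Moreover, your conditions (a) and (b) genuinely do not imply the conclusion. Take $\ell_n=\frac{\ln n}{2n}$, so $\delta=1/2$ and (b) holds for every index automatically; if at each step you take $n_{k+1}$ only as large as (a) forces (roughly $n_{k+1}\approx(2^k n_k)^{1/\alpha}$), then $\ln n_k$ grows essentially geometrically with ratio $1/\alpha$, each block contributes about $\tfrac12\ln n_{k+1}$ to the partial sum, and
\[
\frac{\sum_{s\le n_k}\ell'_s}{\ln n_k}\;\approx\;\frac{1}{2(1-\alpha)}\;>\;\frac12=\delta ,
\]
so $D(L')\le\delta$ fails for a choice of $\{n_k\}$ fully consistent with your construction.

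What is missing is precisely the content of the paper's proof: the sequence $\{n_k\}$ must be chosen sparse with respect to the \emph{accumulated sums}, not just so that the last term is controlled. The paper (i) notes by a convexity argument (the function $x\mapsto a+x\ell'_{n_{k+1}}-\delta\ln(n_k+x)$ has a single critical point, so its maximum on a block is at the endpoints) that it suffices to estimate the ratio at the times $N=n_k$; and (ii) at those times either uses $\sum_{s\le n_k}\ell'_s\le kq$ when $q=\limsup_n n\ell'_n<\infty$, or, when $q=\infty$, chooses $n_{k+1}$ so large that $\ln n_{k+1}$ dominates everything accumulated up to $n_k$, leaving only the last block, whose contribution is at most $(\delta+o(1))\ln n_{k+1}$ because $n_{k+1}$ runs along a subsequence realizing the liminf defining $\delta$ (this last ingredient is the only one your step (b) captures). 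Your inductive scheme has enough freedom to add such a domination condition, but as written the proof does not establish $D(L')\le\delta$.
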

\begin{proof}
Let $x$ be a number so that $n_k< n_k +x \leq n_{k+1}$. 
Consider
\[
\frac{\sum_{s\leq n_{k}}\ell'_s + x \ell'_{n_{k+1}}}{\ln (n_k + x)}.
\]
Set $a=\sum_{s\leq n_{k}}\ell'_s$.
We want to show that for all $x$ we have
\[
\frac{a + x \ell'_{n_{k+1}}}{\ln (n_k + x)}<\delta.
\]
Alternatively $A(x)=a + x \ell'_{n_{k+1}} -\delta \ln (n_k+x)<0$. 
For this we compute
\[
\frac{d}{dx}A(x)=A'(x)=\ell'_{n_{k+1}} -\frac{\delta}{n_k+x}.
\]
We see that $A(x)$ has only one critical point. 
For small value of $x$ $A(x)$ decreases, since $\ell'_{n_{k+1}}$ is small, while for large values it increases. 
Hence, we see that the maximum is achieved at the endpoints $A(1)$ and $A(n_{k+1}-n_{k})$. 
Thus, to estimate the maximum of $\limsup_{N \to \infty} (\sum_{s \leq N}\ell'_s/\ln N)$, we need to estimate it at times $\{n_k\}_{k \geq 1}$. 
Let $q=\limsup_{n \to \infty} n \ell'_{n}$. If $q<\infty$, then we will have that $\sum_{s \leq n_k}\ell'_s \leq kq$. Hence
\[
\frac{\sum_{s\leq n_{k}}\ell'_s}{\ln n_k}\leq \frac{k q}{\ln n_k}\to 0
\]
as $k\to \infty$. 
If $q=\infty$ we can take $n_k$ so large that
\[
\limsup_{k \rightarrow \infty} \frac{\sum_{s\leq n_{k}}\ell'_s}{\ln n_k}{{=}}\limsup_{k \rightarrow \infty} \frac{\sum_{s\leq n_{k-1}}\ell'_s+(n_k-n_{k-1})\ell_{n_k}}{\ln n_k}=\liminf_{k \rightarrow \infty}\frac{n_k \ell_{n_k}}{\ln n_k}=\delta.
\]
\end{proof}

We next state \cite{Kahane1985}*{page 160, Theorem 4} on Dvoretzky covering. 
\begin{theorem}\label{Thm-Kh}
Let $A\subset \mathbb{T}$ be a set so that $\alpha=\dim_{\mathrm{H}} A<1$. 
If $\alpha>D(L)$, then $A \not\subset \limsup B(\omega_n,r_n)$ almost surely. 
Moreover, the random subset of $A$ which is not covered infinitely often has Hausdorff dimension at least $\alpha-D(L)$ almost surely.
\end{theorem}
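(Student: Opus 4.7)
The result cited is Kahane's classical theorem on Dvoretzky covering; while one would ordinarily refer to \cite{Kahane1985} for the complete argument, the natural approach is via Kahane's own potential-theoretic technique. The plan is to produce, for each $s < \alpha - D(L)$, a nonzero random measure supported on $A \setminus \limsup B(\omega_n, r_n)$ whose $s$-energy is almost surely finite; Frostman's lemma then gives $\dim_{\mathrm{H}}(A \setminus \limsup B(\omega_n, r_n)) \geq s$ almost surely, and letting $s \uparrow \alpha - D(L)$ concludes.

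Fix a small $\epsilon > 0$ and, by Frostman's lemma applied to $A$ with $\dim_{\mathrm{H}}A=\alpha$, choose a probability measure $\mu$ supported on $A$ satisfying $\mu(B(x,r))\leq C r^{\alpha-\epsilon}$ for all $x,r$. For each $N \in \mathbb{N}$ introduce the renormalized random measure
\[
d\mu_N(x) = Z_N^{-1}\, \mathbf{1}_{x \notin E_N}\, d\mu(x), \qquad Z_N = \prod_{n=1}^{N}(1-\ell_n),
\]
where $E_N=\bigcup_{n\leq N}B(\omega_n,r_n)$. Independence of the $\omega_n$ together with Fubini gives $\mathbb{E}[\mu_N(\mathbb{T})]=1$ for every $N$.

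The crucial step is the two-point estimate. Since $\omega_n$ covers neither $x$ nor $y$ with probability $1-2\ell_n+(\ell_n-|x-y|)_+$, independence yields
\[
\frac{\Pr(x,y\notin E_N)}{Z_N^{2}} = \prod_{n=1}^{N}\frac{1-2\ell_n+(\ell_n-|x-y|)_+}{(1-\ell_n)^{2}} \leq \exp\!\Bigl(\sum_{n\leq N,\, \ell_n > |x-y|}\ell_n\Bigr).
\]
Because $D(L)<\infty$ forces $\ell_n = O(\ln n / n)$ for large $n$, the inner sum is controlled by $(D(L)+\epsilon)\ln(1/|x-y|)+O(1)$, so that
\[
\mathbb{E}[I_s(\mu_N)] \leq C \int\!\!\int \frac{d\mu(x)\, d\mu(y)}{|x-y|^{s+D(L)+\epsilon}},
\]
which is finite by the Frostman property whenever $s+D(L)+\epsilon<\alpha-\epsilon$.

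To finish, extract a weak subsequential limit $\mu_\infty$ of $\{\mu_N\}$; it inherits finite $s$-energy and is supported on $A\setminus\limsup B(\omega_n,r_n)$. The uniform second-moment bound combined with $\mathbb{E}[\mu_N(\mathbb{T})]=1$ gives a Paley--Zygmund lower bound on $\Pr(\mu_\infty(\mathbb{T})>0)$, and a tail-triviality argument (a suitable 0-1 law for the $\omega_n$) upgrades this to almost sure positivity. Frostman's lemma then yields the dimension bound. The main technical obstacle is the two-point estimate: bounding $\sum_{n:\ell_n>|x-y|}\ell_n$ cleanly in terms of $D(L)\ln(1/|x-y|)$ requires careful use of the $\limsup$ definition, since $D(L)$ is an asymptotic quantity and not a uniform upper bound on partial sums; nondegeneracy of the limiting measure also hinges on the precise matching of first and second moments at the normalization scale $Z_N$.
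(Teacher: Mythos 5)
Your proposal is correct in outline and is essentially the same argument as the one the paper relies on: the paper's own ``proof'' is just a citation of \cite{Kahane1985} (p.~160, Theorem 4 and Proposition 7), and the potential-theoretic second-moment scheme you sketch --- Frostman measure on $A$, normalized restriction $\mu_N$ to the uncovered set, the two-point estimate yielding a kernel $|x-y|^{-D(L)-\epsilon}$, $L^2$/Paley--Zygmund nondegeneracy of the limit measure, and a 0--1 law using that $\limsup_n B(\omega_n,r_n)$ is unchanged by altering finitely many $\omega_n$ --- is precisely Kahane's method. The only blemishes are minor and fixable: for $\ell_n>|x-y|$ the factor is $(1-\ell_n-|x-y|)/(1-\ell_n)^2\le (1-\ell_n)^{-1}$, which exceeds $e^{\ell_n}$, so the two-point bound should read $\exp\bigl((1+o(1))\sum_{n\le N,\,\ell_n>|x-y|}\ell_n\bigr)$ (harmless, since $\ell_n\to 0$ and you already carry an $\epsilon$), and the passage from the partial-sum $\limsup$ defining $D(L)$ to the bound $(D(L)+\epsilon)\ln(1/|x-y|)+O(1)$ uses monotonicity of $\ell_n$ exactly as you indicate.
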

\begin{proof}
The proof follows from \cite{Kahane1985}*{page 160, Theorem 4-2) and 3)}. 
It is proven in the theorem under the assumption that the set $A$ has a finite box-counting dimension. 
However, the proof of the part for no covering is more general and no assumption on the box-counting dimension is needed. It relies on Proposition 7 of the same book which is of a general nature.
\end{proof}

Thus, due to Lemma \ref{lem:D<c} we have $D(L')\leq \delta<\dim_{\mathrm{H}} A$ and Theorem \ref{Thm-Kh} implies $\Pr \left(A\subset\limsup_{n\rightarrow \infty}B(\omega_n,\ell'_n/2)\right)=0$. 
But this implies that in \eqref{A2} we have $\Pr (A\subset\limsup_{k\rightarrow \infty}C_{n_k})=\Pr (A\subset\limsup_{k\rightarrow \infty}\cup_{s=n_{k-1}+1}^{n_{k}}B_{s,n_k})=0$. 
We also have that the Hausdorff dimension of the random set that is not covered infinitely often is of Hausdorff dimension at least $\dim_{\mathrm{H}} A-\delta>0$. 
Since $\dim_{\mathrm{H}}(\limsup_{k\to \infty} R_{n_k})=0$ for every $\omega$, then the set of points that are not covered by $\mathcal{U}(\omega)$ has Hausdorff dimension at least $\dim_{\mathrm{H}} A-\delta>0$.  
This finishes the proof of Theorem \ref{M1}-1).
\end{proof}

\begin{proof}[Proof of Corollary \ref{cor:covering_fail}]
According to the proof of Theorem \ref{M1}-1), it is necessary that $\Pr (A\subset\limsup_{k\rightarrow \infty}C_{n_k})=1$
or that for the sequence $\{\cup_{s=n_{k-1}+1}^{n_{k}}B_{s,n_k}\}_{k=1}^\infty$ there is Dvoretzky covering. 
For any density $f$, different from the uniform, we have that $1/m_f>1$. 
However, by Lemma \ref{lem:D<c} we have that $D(L)<1<1/m_f$. 
Hence, there can be no Dvoretzky covering for $f$ due to \cite{HK2021}*{Theorem 1.1-a)}.
\end{proof}

\subsection{Proof of Theorem \ref{M1}-2)}

Recall that we let $E_n=\cup_{j=1}^{n} B(\omega_j,r_n)=\cup_{j=1}^{n} (\omega_j-r_n,\omega_j+r_n)$. 
Given $\varepsilon>0$, we consider a minimal system of balls of radius $\varepsilon$ covering $A$, say $B\left(x_{1}, \varepsilon\right), \ldots, B\left(x_{N}, \varepsilon\right)$; then $N=N(\varepsilon, A)$ is the $\varepsilon$-covering number of $A$. 
Since $A \subset \cup_{m=1}^{N} B\left(x_{m}, \varepsilon\right)$, then one has
\[
\Pr \left(A \not \subset E_{n}\right) \leq \sum_{m=1}^{N} \Pr \left(B\left(x_{m}, \varepsilon\right) \not \subset E_{n}\right).
\]
Since the distribution of $B(\omega_j,r_n)$ s is invariant under translation, all probabilities in the second member are equal. 
Moreover, one has 
\[
\Pr \left(B(x_m, \varepsilon) \not \subset E_{n}\right)=\Pr\left(x_m \notin \bigcup_{j=1}^{n} B\left(\omega_j,r_n-\varepsilon\right)\right)
\]
for every $m=1,\dots,N$. 
It follows that 
\[
\Pr \left(B(x_m, \varepsilon) \not \subset E_{n}\right)=\prod_{j=1}^{n} \Pr\left(x_m \notin  B\left(\omega_j,r_n-\varepsilon\right)\right)=\prod_{j=1}^{n}(1-(\ell_n-2\varepsilon))
\]
as $\{\omega_n\}$ is uniformly distributed. 
Hence we have 
\[
\Pr \left(A \not \subset E_{n}\right) \leq \sum_{m=1}^{N} \exp \left\{ -\sum_{j=1}^{n} (\ell_n-2\varepsilon)\right\}=\sum_{m=1}^{N} \exp \{-n(\ell_n -2\varepsilon)\}.
\]

We next choose $x\in (0,1)$ and $\varepsilon_n>0$ in such a way that $x\ell_n=2\varepsilon_n$. 
Since $N(\varepsilon,A)\leq \varepsilon^{-\beta}$, we have
\[
\Pr (A \not\subset E_n)\leq \left(\frac{2}{x\ell_n}\right)^\beta\exp \left\{-n (1-x) \ell_n\right\}.
\]
By assumption, we now have that 
\[
\sum_{n=1}^\infty\left(\frac{2}{x\ell_n}\right)^\beta\exp \left\{-n (1-x) \ell_n\right\} <\infty.
\]
for $1-x\in (0,1)$. 
Then, by the Borel-Cantelli lemma we will have that $\Pr(\omega \colon A\subset \mathcal{U}(\omega))=1$.

\subsection{Proof of Corollary \ref{cor:dim}} 

Let $\ell_n=c\frac{\ln n}{n}$. 
It is immediate from Theorem \ref{M1}-1) that there is no covering if $c<\dim_{\mathrm H} A$ since $\delta =c$.

For $\ell_n$, the series in the theorem is equivalent to
\[
\sum_{n=1}^\infty \frac{n^\beta}{(c\ln n)^\beta }\frac{1}{n^{cd}} =\sum_{n=1}^\infty \frac{1}{n^{cd-\beta }(c\ln n)^\beta }.
\]
Thus, we will have convergence if $cd-\beta>1$, that is, $c>(1+\beta)/d$. We see that one can find $d\in (0,1)$ for which the series above is convergent if and only if $c>1+\beta$.

\medskip

Institute of Mathematics, Armenian National Academy of Sciences Marshal Baghramian ave. 24b, Yerevan, 375019, Armenia

Email: dakaragulyan@gmail.com

\end{document}